\title{The Computation of the Logarithmic Cohomology for Plane Curves}
\author{Francisco Jes\'{u}s Castro-Jim\'{e}nez and Nobuki Takayama}
\date{November 28, 2007, Revised December 13, 2007}
\newenvironment{proof}{\par\noindent Proof:\ }{${\tt [}\kern-0.2mm{\tt ]}$}
\newtheorem{theorem}{Theorem}[section]
\newtheorem{proposition}[theorem]{Proposition}
\newtheorem{example}[theorem]{Example}
\newtheorem{algorithm}[theorem]{Algorithm}
\newcommand{\cV}{{\mathcal V}}
\newcommand{\CC}{{\mathbb C}}
\newcommand{\NN}{{\mathbb N}}
\newcommand{\PP}{{\mathbb P}}
\newcommand{\depth}{{\rm depth}}
\begin{document}
\maketitle

\noindent
Abstract:
We will give algorithms of computing bases of logarithmic cohomology
groups for square-free polynomials in two variables.

\section{Introduction}
Let us denote by $R=\CC[x]=\CC[x_1,\ldots,x_n]$ the polynomial ring,
by  $A_n={\bf C}\langle
x_1,\ldots,x_n,\partial_1,\ldots,\partial_n\rangle$ the Weyl algebra
of order $n$ over the complex numbers $\CC$ and by
$(\Omega_R^\bullet,d)$ the complex of polynomial (or regular)
differential forms (i.e. the complex of differential forms with
polynomial coefficients) where $d$ is the exterior derivative.

The elements of $A_n$ are called linear differential operators with
polynomial coefficients. An element $P(x,\partial)$ in $A_n$ can be
written as a finite sum $P(x,\partial) = \sum_\alpha a_\alpha(x)
\partial^\alpha$ where $\alpha=(\alpha_1,\ldots,\alpha_n)\in
\NN^n$, $a_\alpha(x)\in R$ and $\partial^\alpha =
\partial_1^{\alpha_1}\cdots \partial_n^{\alpha_n}$. Here
$\partial_i$ stands for the partial derivative
$\frac{\partial}{\partial x_i}$.

For a non zero polynomial $f\in R$ we denote by $R_f$ the ring of
rational functions $$R_f = \{\frac{g}{f^m} \, \vert \, g\in R ,
m\in \NN\}$$ and by $(\Omega^\bullet_f,d):=(R_f \otimes_ R
\Omega^\bullet_R,d)$ the complex of rational differential forms
with coefficients in $R_f$ where $d$ is the corresponding exterior
derivative.

Let us denote by $Der_\CC(R)$ the free $R$--module of polynomial
vector fields (or equivalently of $\CC$-linear derivations of $R$).
Following K. Saito \cite{Saito80} we will denote by $Der_R(-\log f)$
the $R$--module of logarithmic vector fields with respect to $f$,
i.e.
$$Der_R(-\log f) = \{\delta = \sum_{i=1}^n a_i(x)
\partial_i \in Der_\CC(R) \, \vert \, \delta(f) \in R\cdot f\}.$$ $Der(-\log f)$
is canonically isomorphic to the $R$--module
$Syz_R(\partial_1(f),\ldots,\partial_n(f),f)$ of syzygies among
$(\partial_1(f),\ldots,\partial_n(f),f)$.  This isomorphism
associates the logarithmic vector field $\delta=\sum_i a_i(x)
\partial_i $ with  the syzygy
$(a_1(x),\ldots,a_n(x),-\frac{\delta(f)}{f}).$ We will denote simply
$Der(-\log f)$ if no confusion is possible.

If $f$ is a non zero constant, then $Der(-\log f)=Der_\CC(R)$. So we
will assume from now that $f$ is a non constant polynomial in $R$.

It is clear that $$fDer_\CC(R) \subset Der_R(-\log f) \subset
Der_\CC(R)$$ and then $Der(-\log f)$ has rank $n$ as $R$--module.
The $R$--module $Der_R(-\log f)$ does not depend on the polynomial
$f$ but only on the hypersurface $D=\cV(f):=\{a\in \CC^n\, \vert\,
f(a)=0\} \subset \CC^n$.

Assume $f$ is reduced (i.e. $f$ is square-free). According to K.Saito
\cite{Saito80} a rational differential $p$-form $\omega \in
\Omega^p_f$ is said to be logarithmic with respect to $f$ (or with
respect to the hypersurface $D=\cV(f)\subset \CC^n$) if both $f\omega$
and $fd\omega$ are regular (i.e. $f\omega\in \Omega_R^p$ and
$fd\omega \in \Omega^{p+1}_R$). 
We denote by $\Omega^p(\log f)$ the
$R$--module of logarithmic differential $p$--forms with respect to
$f$.
K. Saito \cite[Corollary 1.6]{Saito80} proved that $Der_R(-\log f)$
is a reflexive $R$--module whose dual is $\Omega^1(\log f)$.
We denote by $(\Omega^\bullet(\log f), d)$ the complex
$$ 0 \longrightarrow               \Omega^0(\log f) 
     \stackrel{d}{\longrightarrow} \Omega^1(\log f) 
     \stackrel{d}{\longrightarrow} 
      \cdots
     \stackrel{d}{\longrightarrow} \Omega^n(\log f) \longrightarrow 0
$$
which will be called the logarithmic de Rham complex and
is also, for simple notation, 
denoted by $\Omega^\bullet(\log f)$ if no confusion arises.
    
Algorithms of computing dimensions and bases of the de Rham cohomology groups
$H^i(\Omega_f^\bullet)$ are given by
T.Oaku and N.Takayama \cite{oaku-takayama-1999}, \cite{oaku-takayama-2001}
and U.Walther \cite{walther}.
Here, $f$ is any non-zero polynomial in $n$-variables.
The purpose of this paper is to give algorithms
of computing dimensions and bases of the logarithmic de Rham cohomology groups
$H^i(\Omega^\bullet(\log f))$ as $\CC$-vector spaces
in the case of two variables.

\subsection{Logarithmic Comparison Theorem}

The rings $R$ and $R_f$ have natural structures of left
$A_n$--module where $\partial_i$ acts on a polynomial $g$ and on a
rational function $\frac{g}{f^m}$ as the partial derivative with
respect to $x_i$.

The de Rham complex of a left $A_n$--module $M$, denote by
$DR(M)$, is by definition the complex of $\CC$--vector spaces
$(M\otimes _R \Omega_R^\bullet,\nabla^\bullet)$ where $$\nabla^p :
M\otimes_R \Omega_R^p \rightarrow M\otimes_R \Omega^{p+1}_R$$ is
defined, for $p\geq 1$, by $\nabla^p(m\otimes \omega) =
\nabla^0(m)\wedge \omega + m\otimes d\omega$ and $\nabla^0(m)=
\sum_i \partial_i(m) \otimes dx_i$. 
Note that $am \otimes \omega = m \otimes a \omega$
for $m \in M$, $\omega \in \Omega^p$ and $a \in R$.
The complexes $\Omega^\bullet
_f $ and $DR(R_f)$ are naturally isomorphic.

For any non zero $f\in R$, the inclusion $i_f$ is a natural
morphism of complexes
$$i_f : \Omega^\bullet (\log f) \rightarrow \Omega^\bullet_f.$$
We say (see \cite{Castro-Mond-Narvaez-96}) that $f$ satisfies the
(global)  logarithmic Comparison Theorem if the morphism $i_f$ is
a quasi-isomorphism (i.e. if $i_f$ induces an isomorphism
$H^p(\Omega^\bullet(\log f)) \rightarrow H^p(\Omega^\bullet_f)$
for any $p$).

If $n=2$, by \cite[Cor. 2.7]{Castro-Mond-Narvaez-96} and \cite[Th.
1.3]{cal-mon-nar-cas}, $i_f$ is a quasi-isomorphism if and only if
$f$ is a quasi-homogeneous polynomial.

\subsection{The case $n=2$. Bases for $Der_R(-\log f)$}\label{casen=2}
If $n=2$, any reflexive $R$--module is projective and then, by
Quillen-Suslin theorem, this $R$--module is free. So, if $n=2$, the
$R$-module $Der_R(-\log f)$ is free of rank 2. In this case, we
would like to compute a basis of $Der_R(\log f)$ by taking the
polynomial $f=f(x,y)$ as input. 
By using the isomorphism
$$Der(-\log f) \simeq Sys_R(\partial_1(f),\partial_2(f),f)$$ and
using Groebner basis computation,
a system of generators of
$Der_R(-\log f)$ can be calculated. Then we can apply Quillen-Suslin
algorithm (as presented for example in \cite{logar-sturmfels} and
implemented in \cite{fabianska}) to compute such a basis. Known
Quillen-Suslin algorithms use Groebner bases computation.
Nevertheless, in some cases, for a big family of polynomials
$f(x_1,x_2)$ we will use an easier way to compute a basis of
$Der(-\log f)$.

First of all, we can assume $f$ to be a reduced polynomial since
$Der(-\log f)$ depends only on the affine plane curve
$D=\cV(f)=\{(a_1,a_2)\in \CC^2\, \vert\, f(a_1,a_2)=0\} \subset
\CC^2$.

Assume the plane curve $D=\cV(f)$ is not smooth. The singular points
of the plane curve $D=\cV(f)$ (i.e. the affine algebraic set
$$Sing(D):=\cV(f,f_1,f_2)
=\{\underline{a}=(a_1,a_2)\in \CC^2\, \vert \,
f(\underline{a})=f_1(\underline{a})=f_2(\underline{a})=0\})$$
--where $f_1=\partial_1(f),\, f_2=\partial_2(f) $ -- consists in a
finite number of points (and it is not the empty set).

We will consider the affine plane $\CC^2 $ as a Zariski open subset
of the  projective plane $\PP_2(\CC)$, the affine point $(a_1,a_2)$
is mapped into the point with homogeneous coordinates $(1:a_1:a_2)$.
Coordinates in $\PP_2(\CC)$ will be denoted by $(x_0:x_1:x_2)$ and then
the line at infinity is defined by $x_0=0$.

Let us denote $h=H(f), h_1=H(f_1)$ and $h_2=H(f_2)$ where $H(-)$
denotes dehomogenization with respect to the variable $x_0$. We
will denote by $Z=\cV_\PP(h,h_1,h_2)\subset \PP_2(\CC)$ (resp.
$Z'=\cV(h,h_1,h_2) \subset \CC^3$ ) the projective algebraic set
(resp. the affine algebraic set) defined by the polynomials
$h,h_1,h_2$. The non-empty set $Z$ (resp. $Z'$) consists of a finite
number of points  in $\PP_2(\CC)$ (resp. a finite number of
straight lines in $\CC^3$). Denote by $S=\CC[x_0,x_1,x_2]$ the  ring
of polynomials graded by the degree of the polynomials. If
$J=(h,h_1,h_2)$ denotes the ideal in $S$ generated by $h,h_1,h_2$
then the quotient ring $S/J$ has Krull dimension 1. Let us denote by
$S_+$ the irrelevant ideal in $S$, i.e. the ideal generated by
$x_0,x_1,x_2$.

\begin{proposition} The graded ring $S/J$ is Cohen-Macaulay if and
only if $J$ is unmixed (i.e. $S_+$ is not an embedded prime associated
with $J$).
\end{proposition}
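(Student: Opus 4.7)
The plan is to reduce the equivalence to a standard identification of depth with the absence of embedded primes, after pinning down the possible shape of the associated primes of $J$.

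First I would exploit the geometric data already extracted in the excerpt: $Z'=\cV(h,h_1,h_2)\subset\CC^3$ is a finite union of straight lines through the origin, hence of pure dimension $1$. Since $S$ is Cohen--Macaulay of dimension $3$, every minimal prime $P$ of $J$ corresponds to one of these lines and therefore has height exactly $2$; equivalently $\dim(S/P)=1$. The irrelevant maximal ideal $S_+$ has height $3$, so it cannot be minimal over $J$; hence $S_+$ is the \emph{only} candidate for an embedded associated prime of $J$. In particular, unmixedness of $J$ in the usual sense (no embedded primes at all) is equivalent to the condition $S_+\notin \mathrm{Ass}(S/J)$ stated in the proposition.

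Next I would invoke the depth criterion. Because $\dim(S/J)=1$, Cohen--Macaulayness of the graded ring $S/J$ is equivalent to $\depth_{S_+}(S/J)\geq 1$, i.e.\ to the existence of a homogeneous element of positive degree that is a nonzerodivisor on $S/J$. The set of zerodivisors on $S/J$ is the union of the finitely many associated primes of $J$, each of which is homogeneous and contained in $S_+$. By (graded) prime avoidance, $S_+$ contains such a nonzerodivisor if and only if $S_+$ is not contained in any associated prime; since $S_+$ is maximal, this says exactly $S_+\notin \mathrm{Ass}(S/J)$.

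Combining the two steps yields the desired chain of equivalences: $S/J$ is Cohen--Macaulay iff $\depth_{S_+}(S/J)\geq 1$ iff $S_+\notin\mathrm{Ass}(S/J)$ iff $J$ is unmixed. The argument is essentially bookkeeping of standard graded commutative-algebra facts; the only step requiring some care is ensuring that depth and prime avoidance are applied in the graded category, using that associated primes of a homogeneous ideal are themselves homogeneous. I therefore do not anticipate any serious obstacle.
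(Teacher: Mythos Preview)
Your proof is correct and follows essentially the same route as the paper: both arguments reduce Cohen--Macaulayness to $\depth_{S_+}(S/J)\geq 1$ (using $\dim S/J=1$) and identify this with $S_+\notin\mathrm{Ass}(S/J)$ via the fact that homogeneous nonzerodivisors exist precisely when $S_+$ is not an associated prime. The only cosmetic difference is that the paper dispatches the forward implication by citing the general fact ``Cohen--Macaulay $\Rightarrow$ unmixed'' from Matsumura, whereas you run the depth--prime-avoidance argument symmetrically in both directions and add the preliminary observation that $S_+$ is the only possible embedded prime.
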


\begin{proof} If $S/J$ is Cohen-Macaulay then $J$ is unmixed (see
\cite{matsumura}). If $J$ is unmixed then $S_+$ is not an embedded
prime of $J$ and then the set of non zero-divisors of $S/J$ contains
homogeneous elements of positive degree. That proves $depth(S/J)\geq
1$ but we also have $\depth(S/J)\leq \dim(S/J) = 1$.
\end{proof}

If $S/J$ is Cohen-Macaulay then the projective dimension of $S/J$ is
2 and $J$ satisfies the Hilbert-Burch Theorem \cite{eisenbud}, i.e.
there exists an exact sequence $$0 \rightarrow S^{2}
\stackrel{\phi_2}{\longrightarrow} S^3
\stackrel{\phi_1}{\longrightarrow} J \rightarrow 0
$$ where $\phi_1(g_0,g_1,g_2)=g_0h + g_1 h_1 + g_2 h_2$ and $\phi_2$
is defined by a syzygy matrix of $\phi_1$. In particular, since
$\ker(\phi_1)=Syz_S(h,h_1,h_2)$ is a graded free $S$--module of rank
2 we can compute $\{ s^{(1)}=(s_{10},s_{11},s_{12}),
s^{(2)}=(s_{20},s_{21},s_{22}) \}$ a minimal system of generators
and this system is in fact a basis of $\ker(\phi_1)$. By
dehomogenization (i.e. by setting $x_0=1$), we obtain a system
$\{s^{(1)}_{\vert x_0=1}, s^{(2)}_{\vert x_0=1} \}$ of generators
of $Syz_R(f,f_1,f_2) \simeq Der_R(-\log f)$ and since this
$R$--module is free of rank 2, this last system is in fact a basis.

If $S/J$ is not Cohen-Macaulay we cannot apply, in general,
the Hilbert-Burch theorem and the previous procedure fails to compute a
basis of $Der_R(-\log f)$.

\begin{example} \rm \label{example:qs}
(a) Consider the polynomial $f=(x^3+y^4+
xy^3)(x^2-y^2)$. With the notations as before (and writing $x_1=x,
x_2=y, x_0=t)$ we can use {\tt Macaulay 2} to prove that the corresponding $S/J$ is
Cohen-Macaulay and to compute a minimal system of generators of $Syz_S(h,h_1,h_2)$ and then a basis
of $Der_R(-\log f)$.

{\footnotesize
\begin{verbatim}
Macaulay 2, version 0.9.2
--Copyright 1993-2001, D. R. Grayson and M. E. Stillman
--Singular-Factory 1.3b, copyright 1993-2001, G.-M. Greuel, et al.
--Singular-Libfac 0.3.2, copyright 1996-2001, M. Messollen

i1 : R=QQ[t,x,y];

i2 : f=(x^3+y^4+x*y^3)*(x^2-y^2);

i3 : f1=diff(x,f),f2=diff(y,f),h=homogenize(f,t),h1=homogenize(f1,t),h2=homogenize(f2,t);

i4 : Jf=ideal(h,h1,h2);

o4 : Ideal of R

i5 : pdim coker gens Jf

o5 = 2

i6 : Syzf=kernel matrix({{h1,h2,h}})

o6 = image {5} | x3+1/3x2y-4/3xy2 -tx2+4txy+3x2y+4xy2-y3 |
           {5} | 2/3x2y+1/3xy2-y3 tx2-txy+3ty2+2xy2+4y3  |
           {6} | -5x2-5/3xy+6y2   5tx-18ty-15xy-23y2     |

                             3
o6 : R-module, submodule of R

i7 : mingens Syzf

o7 = {5} | x3+1/3x2y-4/3xy2 -tx2+4txy+3x2y+4xy2-y3 |
     {5} | 2/3x2y+1/3xy2-y3 tx2-txy+3ty2+2xy2+4y3  |
     {6} | -5x2-5/3xy+6y2   5tx-18ty-15xy-23y2     |

             3       2
o7 : Matrix R  <--- R
\end{verbatim}
}

Then the basis of $Der_R(-\log f)$ is
{\footnotesize
$$\{(x^3+\frac{1}{3}x^2y-\frac{4}{3}xy^2)\partial_x + ( \frac{2}{3}x^2y+\frac{1}{3}xy^2-y^3) \partial_y
, \, (-x^2+4xy+3x^2y+4xy^2-y^3)\partial_x + (x^2-xy+3y^2+2xy^2+4y^3) \partial_y\} $$}

\noindent (b) Consider the polynomial $g=(x^3+y^4+
xy^3)(x^2+y^2)$. With the notations as before (and writing $x_1=x,
x_2=y, x_0=t)$ we can use {\tt Macaulay 2} to prove that the corresponding $S/J$ is not
Cohen-Macaulay and the minimal number of generators of $Syz_S(h,h_1,h_2)$ is 3. We can continue
the last {\tt Macaulay 2} session:

{\footnotesize
\begin{verbatim}

i8 : g=(x^3+y^4+x*y^3)*(x^2+y^2);

i9 : g1=diff(x,g),g2=diff(y,g),h=homogenize(g,t),h1=homogenize(g1,t),h2=homogenize(g2,t);

i10 : Jg=ideal(h,h1,h2);

i11 : pdim coker gens Jf

o11 = 3

i12 : Syzg=kernel matrix({{h1,h2,h}})

o12 =
image
{5} | tx2-5x3-4txy-20/3x2y-2xy2-5/3y3 x4+4/3x3y+x2y2+4/3xy3    tx3-tx2y+4x3y+4txy2+16/3x2y2+2xy3+4/3y4 |
{5} | tx2+txy-10/3x2y-3ty2-5xy2-1/3y3 2/3x3y+x2y2+2/3xy3+y4    -txy2+8/3x2y2+3ty3+4xy3+2/3y4           |
{6} | -5tx+25x2+18ty+100/3xy+11/3y2   -5x3-20/3x2y-13/3xy2-6y3 -5tx2+5txy-20x2y-18ty2-80/3xy2-16/3y3   |

                              3
o12 : R-module, submodule of R

i13 : mingens Syzg

o13 =
{5} | tx2-5x3-4txy-20/3x2y-2xy2-5/3y3 x4+4/3x3y+x2y2+4/3xy3    tx3-tx2y+4x3y+4txy2+16/3x2y2+2xy3+4/3y4 |
{5} | tx2+txy-10/3x2y-3ty2-5xy2-1/3y3 2/3x3y+x2y2+2/3xy3+y4    -txy2+8/3x2y2+3ty3+4xy3+2/3y4           |
{6} | -5tx+25x2+18ty+100/3xy+11/3y2   -5x3-20/3x2y-13/3xy2-6y3 -5tx2+5txy-20x2y-18ty2-80/3xy2-16/3y3   |

              3       3
o13 : Matrix R  <--- R
\end{verbatim}
}
We will revisit this example in Example \ref{example:small}.
\end{example}

\section{Logarithmic $A_n$--modules}

Let us denote by $M^{\log f}$ the quotient $A_n$--module $M^{\log
f } = \frac{A_n}{A_n Der_R(-\log f)}$. Moreover, we denote by
$\widetilde{Der}_R(-\log f)$ the set $$\widetilde{Der}_R(-\log f)
= \{\delta + \frac{\delta(f)}{f}\, \vert \, \delta \in Der_R(-\log
f) \}$$ and by $\widetilde{M}^{\log f}$ the quotient $A_n$--module
$$ \widetilde{M}^{\log f} = \frac{A_n}{A_n \widetilde{Der}_R(-\log
f)}.$$

As quoted in subsection \ref{casen=2}, for $n=2$ the $R$--module
$Der(-\log f)$ (and hence $\Omega^1(\log f)$) is free of rank 2.
Moreover, by \cite[1.8]{Saito80} there exists a $R$-basis
$\{\delta_1,\delta_2\}$ of $Der(-\log f)$ satisfying $\det(A)=f$
where
$$\delta_i=a_{i1}\partial_1 + a_{i2}\partial_2, \, \, \, i=1,2$$  and $A$
is the matrix $(a_{ij})$. Then the dual basis of
$\{\delta_1,\delta_2\}$ is $\{\omega_1,\omega_2\}$ with $$\omega_1 =
\frac{1}{f}(a_{22}dx_1 - a_{21}dx_2)\, \, \,  \omega_2 =
\frac{1}{f}(-a_{12}dx_1+a_{11}dx_2).$$

The $R$--module $\Omega^2(\log f)$ is free of rank 1 and $\omega_1
\wedge \omega_2$ is a basis of it. Moreover we have $\omega_1 \wedge
\omega_2 = \frac{dx_1\wedge dx_2}{f}$.

\begin{proposition}\label{omega-sol} Let $f\in R=\CC[x,y]$ be  a non zero reduced polynomial.
There exists  a   natural  quasi-isomorphism
 $$\Omega^\bullet(\log f)
\stackrel{\simeq}{\longrightarrow } {\bf R} Hom_{A_2}(M^{\log f},
R)$$ where the last complex is the solution complex  of $M^{\log f}$
with values in $R$.
\end{proposition}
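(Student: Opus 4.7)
My plan is to exhibit a natural free resolution of $M^{\log f}$ as a left $A_2$--module, apply $Hom_{A_2}(-,R)$ term by term, and then recognize the result as $\Omega^\bullet(\log f)$ via K.~Saito's duality.

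The candidate resolution is the logarithmic Spencer complex
$$0 \to A_2 \otimes_R \wedge^2 Der(-\log f) \stackrel{d_2}{\to} A_2 \otimes_R Der(-\log f) \stackrel{d_1}{\to} A_2 \to M^{\log f} \to 0,$$
with $d_1(P \otimes \delta) = P\delta$ and $d_2(P \otimes \delta \wedge \delta') = P\delta \otimes \delta' - P\delta' \otimes \delta - P \otimes [\delta,\delta']$. It is well defined because the Lie bracket of two logarithmic vector fields is again logarithmic, and the terms are $A_2$--free because $Der(-\log f)$ is $R$--free of rank $2$ (Section~\ref{casen=2}). To prove exactness in positive degrees I would filter $A_2$ by the order of operators and pass to the associated graded $gr(A_2) = R[\xi_1,\xi_2]$; up to a shift the Spencer complex becomes the Koszul complex on the symbols $\sigma(\delta_i) = a_{i1}\xi_1 + a_{i2}\xi_2$, where $\delta_1,\delta_2$ is the Saito basis with $\det(a_{ij}) = f$. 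Because $f \ne 0$, Cramer's rule gives $f\xi_1, f\xi_2 \in (\sigma(\delta_1),\sigma(\delta_2))$, from which one deduces that these two linear forms in $\xi_1,\xi_2$ constitute a regular sequence in $R[\xi_1,\xi_2]$. The associated graded Koszul complex is therefore acyclic, and by a standard filtered-complex argument the Spencer complex itself is a resolution of $M^{\log f}$.

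Applying $Hom_{A_2}(-,R)$ uses the adjunction $Hom_{A_2}(A_2 \otimes_R V, R) = Hom_R(V,R)$ valid for any $R$--module $V$. Together with Saito's duality identifying $Hom_R(\wedge^p Der(-\log f), R)$ with $\Omega^p(\log f)$ for $p = 0,1,2$ (Corollary~1.6 of \cite{Saito80} for $p=1$, and for $p=2$ a rank-one check using the explicit dual basis $\omega_1,\omega_2$ displayed before the proposition), this produces a complex whose $p$th term is $\Omega^p(\log f)$. It remains to check that the dualized differentials agree with the exterior derivative: under the Saito pairing $\langle \omega, \delta \rangle = \iota_\delta \omega$, the dual of $d_1$ sends $r \in R$ to the functional $\delta \mapsto \delta(r)$, which is $dr$; and the dual of $d_2$, by the Cartan-type formula $d\omega(\delta,\delta') = \delta \langle\omega,\delta'\rangle - \delta'\langle\omega,\delta\rangle - \langle\omega,[\delta,\delta']\rangle$, is exactly the exterior derivative $\Omega^1(\log f) \to \Omega^2(\log f)$.

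The main obstacle is the exactness claim for the Spencer complex; once the regular-sequence statement on the symbols is secured in the $n=2$ setting, the rest is a formal unpacking of Saito's duality and the Cartan formula.
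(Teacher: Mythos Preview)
Your proposal is correct and follows essentially the same route as the paper: both resolve $M^{\log f}$ by the logarithmic Spencer complex, apply $Hom_{A_2}(-,R)$, and identify the result with $\Omega^\bullet(\log f)$. The paper writes everything in coordinates relative to a fixed Saito basis $\{\delta_1,\delta_2\}$ and cites Calder\'on for exactness of the Spencer complex, whereas you give the coordinate-free formulation and sketch the exactness via the regular-sequence argument on symbols (which is in fact the argument underlying Calder\'on's result, and which the paper itself invokes in the proof of Proposition~\ref{dual}); your use of the Cartan formula to identify the dualized differential with $d$ is exactly what the paper's explicit maps $\eta_0,\eta_1,\eta_2$ encode. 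One small remark: the step ``$f\xi_1,f\xi_2\in(\sigma(\delta_1),\sigma(\delta_2))$, from which one deduces a regular sequence'' is not quite self-contained---the clean way to finish is to note that a common factor of $\sigma(\delta_1),\sigma(\delta_2)$ in the UFD $R[\xi_1,\xi_2]$ must lie in $R$ and divide every $a_{ij}$, hence its square divides $\det(a_{ij})=f$, contradicting reducedness.
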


This Proposition is proven in \cite{Calderon-thesis} in a more
general setting using the notion of $V_0$--module. We will give
here a direct proof to apply for our algorithm of computing logarithmic 
cohomology groups.

\begin{proof} F.J. Calder\'{o}n \cite{Calderon-thesis} defines the so
called {\em logarithmic Spencer complex} associated with $M^{\log
f}$. In our situation, once a basis $\{\delta_1,\delta_2\}$ is
fixed in $Der(-\log f)$, this complex is nothing but
\begin{equation}\label{log-spencer} 0 \rightarrow A
\stackrel{\epsilon_2}{\longrightarrow} A^2 \stackrel{\epsilon_1
}{\longrightarrow} A \rightarrow 0
\end{equation} where $A$ stands for $A_2$, the $A$-module morphism
$\epsilon_1$ is defined by $\epsilon_1(P_1,P_2)=P_1\delta_1 +P_2
\delta_2$ (for $P_i\in A$) and $\epsilon_2$ is defined by
$\epsilon_2(Q)=Q(-\delta_2-b_1,\delta_1-b_2)$ for $Q\in A$ and the
polynomials  $b_i$ being defined by the equality
$[\delta_1,\delta_2] = \delta_1\delta_2-\delta_2\delta_1 =
b_1\delta_1 + b_2\delta_2$. In \cite{Calderon-thesis} it is proven
that this complex is a $A$--free resolution of the module $M^{\log
f}$. We will use this resolution to find a complex of
$\CC$--vector spaces representing the solution complex ${\bf R}
Hom_A (M^{\log f},R)$. Applying the functor $Hom_A(-,R)$ to the
logarithmic Spencer complex and using the natural isomorphism
$R\simeq Hom_A(A,R)$, we obtain the complex
$$0 \rightarrow R \stackrel{\epsilon_1^*}{\longrightarrow} R^2
\stackrel{\epsilon_2^*}{\longrightarrow } R \rightarrow 0$$ where
$\epsilon_1^*(g)= (\delta_1(g),\delta_2(g))$ for $g\in R$ and
$\epsilon_2^*(h_1,h_2)=\delta_1(h_2)-\delta_2(h_1)-b_1h_1-b_2h_2$
for $h_i \in R$. There is a natural morphism of complexes

$$ \begin{array}{ccccc}  \Omega^0(\log f) = R  & \stackrel{d}{\rightarrow}  & \Omega^1(\log f) & \stackrel{d}{\rightarrow} &  \Omega^2(\log
f)\\ &&&& \\{}^{\eta_0}\downarrow & & {}^{\eta_1} \downarrow& & {}^{\eta_2}\downarrow\\
R & \stackrel{\epsilon_1^*}{\longrightarrow} & R^2 &
\stackrel{\epsilon_2^*}{\longrightarrow }&  R
\end{array}
$$ where $\eta_0=id$,  $\eta_1(h_1 \omega_1 + h_2 \omega_2)= (h_1,h_2)$ and
$\eta_2(g \omega_1 \wedge \omega_2) = g$ for $h_1,h_2,g \in R$ and
where $\{\omega_1,\omega_2\}$ is the dual basis in $\Omega^1(\log
f)$ of the basis $\{\delta_1,\delta_2\}$ in $Der(-\log f)$. It is
obvious that this morphism $\eta_\bullet$ of complexes of vector
spaces is in fact an isomorphism of complexes. That proves the
proposition.
\end{proof}

To each  finitely generated left $A_n$--module $M$ we associate
the complex of finitely generated right $A_n$--modules ${\bf R}
Hom_{A_n}(M,A_n)$. To this one we associate the complex of
finitely generated left $A_n$--modules $Hom_R(\Omega^n_R, {\bf R}
Hom_{A_n}(M,A_n))$ which is by definition the dual $M^*$ of the
left $A_n$--module $M$.

If $M$ is holonomic (i.e. if the dimension of the characteristic
variety of $M$ is $n$) then it can be shown that $Ext^i
_{A_n}(M,A_n)=0$ for $i\not=n$  and then $M^*$ is the left
holonomic $A_n$--module $Hom_R(\Omega^n_R,Ext^n_{A_n}(M,A_{n}))$
(see e.g. \cite[pag. 41]{Meb-hermann-89}). Assume
$Ext^n_{A_n}(M,A_n) = \frac{A_n}{J}$ for some right ideal
$J\subset A_n$. Then $Hom_R(\Omega^n_R,A_n/J)$ is naturally
isomorphic to the left $A_n$--module $\frac{A_n}{J^T}$ where $J^T$
is the left ideal $J^T=\{P^T \, \vert\, P\in J\}$ and $P^T$ is the
formal adjoint of the operator $P$.

If $N_1,N_2$ are finitely generated left $A_n$--modules there
exists a natural isomorphism of complexes $${\bf R}
Hom_{A_n}(N_1,N_2) {\rightarrow} {\bf R}Hom_{A_n}({\bf R}
Hom_{A_n}(N_2,A_n),{\bf R} Hom_{A_n}(N_1,A_n))$$ and then a
natural isomorphism $${\bf R} Hom_{A_n}(N_1,N_2) {\rightarrow}
{\bf R}Hom_{A_n}(N_2^*,N_1^*).$$

In particular, if $N_2=R=\CC[x_1,\ldots,x_n]$ then there exists a
natural isomorphism from ${\bf R} Hom_{A_n}(N_1,R)$ (i.e. the
solution complex of $N_1$) to $$ {\bf R}Hom_{A_n}(R^*,N_1^*).$$ As
the complex ${\bf R} Hom_{A_n}(R,A_n)$ is naturally isomorphic to
$\Omega_R^n$ we can identify $R$ and $R^*$ and then we have a
natural isomorphism \begin{equation}\label{sol-derham} {\bf R}
Hom_{A_n}(N_1,R) \stackrel{\simeq}{\rightarrow} {\bf
R}Hom_{A_n}(R,N_1^*) \stackrel{\simeq}{\rightarrow} DR(N_1^*).
\end{equation}

\begin{proposition}\label{dual} Let $f\in\CC[x,y]$ be a non zero  reduced
polynomial. Then there exists a natural isomorphism  $$(M^{\log
f})^* \simeq \widetilde{M}^{\log f}.$$
\end{proposition}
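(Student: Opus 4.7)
The plan is to produce an explicit presentation of $(M^{\log f})^*$ by dualizing the logarithmic Spencer resolution of $M^{\log f}$ used in the proof of Proposition \ref{omega-sol}, and then to match the resulting left ideal with $A_2\widetilde{\mathrm{Der}}_R(-\log f)$ via a pair of polynomial identities that follow from Saito's normalization $\det A = f$.

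First I would note that since the logarithmic Spencer complex (\ref{log-spencer}) is a length-$2$ free resolution of $M^{\log f}$ and $M^{\log f}$ is holonomic (because $\mathrm{Der}(-\log f)$ is a free $R$-module of rank $2$, as recalled in Subsection \ref{casen=2}), we have $\mathrm{Ext}^i_{A_2}(M^{\log f},A_2)=0$ for $i\ne 2$, so $(M^{\log f})^* = \mathrm{Hom}_R(\Omega_R^2,\mathrm{Ext}^2_{A_2}(M^{\log f},A_2))$. Applying $\mathrm{Hom}_{A_2}(-,A_2)$ to (\ref{log-spencer}) and using $\mathrm{Hom}_{A_2}(A_2,A_2)\simeq A_2$ exhibits $\mathrm{Ext}^2_{A_2}(M^{\log f},A_2)$ as the cokernel of the dual of $\epsilon_2$; a direct inspection shows that this cokernel, as a right $A_2$-module, is $A_2/(\alpha_1 A_2 + \alpha_2 A_2)$ with $\alpha_1 = -\delta_2 - b_1$ and $\alpha_2 = \delta_1 - b_2$.

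Next, to pass from right to left $A_2$-modules I would apply the twist $\mathrm{Hom}_R(\Omega_R^2,-)$, which on a cyclic quotient is given by the formal adjoint and, since $P\mapsto P^T$ is an antiautomorphism, sends the right ideal $\alpha_1 A_2 + \alpha_2 A_2$ to the left ideal $A_2\alpha_1^T + A_2\alpha_2^T$. Using $x_i^T=x_i$, $\partial_i^T=-\partial_i$, and hence $\delta^T = -\delta - \mathrm{div}(\delta)$ for any vector field $\delta$, one obtains $\alpha_1^T = \delta_2 + \mathrm{div}(\delta_2) - b_1$ and $\alpha_2^T = -(\delta_1 + \mathrm{div}(\delta_1) + b_2)$. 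Thus $(M^{\log f})^* \simeq A_2/\bigl(A_2(\delta_1 + \mathrm{div}(\delta_1) + b_2) + A_2(\delta_2 + \mathrm{div}(\delta_2) - b_1)\bigr)$.

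The main step, and the principal obstacle, is then to verify the polynomial identities
$$\mathrm{div}(\delta_1) + b_2 = \delta_1(f)/f,\qquad \mathrm{div}(\delta_2) - b_1 = \delta_2(f)/f,$$
which would immediately convert the left ideal above into $A_2\widetilde{\mathrm{Der}}_R(-\log f)$ and finish the proof. These identities should follow by direct computation: writing $\delta_i = a_{i1}\partial_1 + a_{i2}\partial_2$ and expanding $[\delta_1,\delta_2] = b_1\delta_1 + b_2\delta_2$ by coefficient of $\partial_j$ yields a $2\times 2$ linear system for $(b_1,b_2)$ with coefficient matrix $A^T$; inverting $A$ using $\det A = f$ gives closed-form rational expressions for $b_1,b_2$. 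Comparing $f\cdot(\mathrm{div}(\delta_i) \pm b_j)$ with $\delta_i(f) = \delta_i(\det A)$ expanded by the Leibniz rule (equivalently, the Jacobi formula $\delta(\det A) = \det A \cdot \mathrm{tr}(A^{-1}\delta(A))$) produces an all-terms cancellation. Finally, to promote this presentation-dependent isomorphism to a natural one, I would observe that the construction is functorial in the choice of basis and that the right-hand side $\widetilde{M}^{\log f}$ is canonically identified with $A_2\cdot f^{-1}\subset R_f$, annihilated precisely by $\widetilde{\mathrm{Der}}_R(-\log f)$.
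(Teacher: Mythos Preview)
Your approach is essentially the paper's own: dualize the logarithmic Spencer resolution, read off $\mathrm{Ext}^2$ as a cyclic right module, pass to the left side by formal adjoint, and identify the resulting generators with $\widetilde{\mathrm{Der}}_R(-\log f)$. The paper simply cites \cite[Cor.~3.1]{castro-ucha-jsc} for the final identities $(-\delta_2-b_1)^T=\delta_2+\delta_2(f)/f$ and $(\delta_1-b_2)^T=-\delta_1-\delta_1(f)/f$, whereas you outline a direct verification via $\delta^T=-\delta-\mathrm{div}(\delta)$ together with the Jacobi formula applied to $\det A=f$; that computation does go through exactly as you describe. One point to tighten: your justification of holonomicity (``because $\mathrm{Der}(-\log f)$ is free of rank~2'') is not sufficient on its own---freeness gives you the Spencer resolution but not the vanishing of $\mathrm{Ext}^i$ for $i\neq 2$. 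The paper argues instead that $\sigma(\delta_1),\sigma(\delta_2)$ form a regular sequence in $R[\xi_1,\xi_2]$, so the characteristic variety of $M^{\log f}$ (and of $\widetilde{M}^{\log f}$) has dimension~2; you should either invoke this or supply an equivalent reason.
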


\begin{proof} This is one of the main results  in \cite{castro-ucha-jsc}. We
include here its proof   for the sake of  completeness. First of
all,  both $A_2$--modules $M^{\log f}$ and $ \widetilde{M}^{\log f}$
are holonomic. That can be deduced from \cite[Cor.
4.2.2]{Calderon-thesis} since the set of principal symbols
$\{\sigma(\delta_1), \sigma(\delta_2)\}$ is a regular sequence in
the polynomial ring $R[\xi_1,\xi_2]$ and then the Krull dimension of
the quotient ring $$R[\xi_1,\xi_2]/\langle \sigma(\delta_1),
\sigma(\delta_2) \rangle$$ is 2. Then the characteristic variety of
both $A_2$--modules $M^{\log f}$ and $ \widetilde{M}^{\log f}$ has
dimension 2 and the modules are holonomic.

We will use the logarithmic Spencer complex associated with $M^{\log
f}$ (see the complex (\ref{log-spencer})) in order to compute
$Ext^2_{A}(M,A)$ where $A=A_2$ and $M=M^{\log f}$. Applying the
functor $Hom_{A_2}(-,A_2)$ to the complex (\ref{log-spencer}) we get
(by using the natural isomorphism $Hom_{A_2}(A_2,A_2)\simeq A_2$)
$$ 0 \longrightarrow A \stackrel{\overline{\epsilon_1}}{\longrightarrow}
A^2 \stackrel{\overline{\epsilon_2}}{\longrightarrow } A
\longrightarrow 0$$ where
$\overline{\epsilon_1}(P)=(\delta_1P,\delta_2P)$ and
$\overline{\epsilon_2}(P_1,P_2)=(-\delta_2-b_1)P_1 +
(\delta_1-b_2)P_2$. Then we have $$Ext_{A}^2(M,A) \simeq
\frac{A}{(-\delta_2-b_1,\delta_1-b_2)A}.$$ So, $$M^* \simeq
\frac{A}{A((-\delta_2-b_1)^T,(\delta_1-b_2)^T)}.$$ Finally,
$(-\delta_2-b_1)^T = \delta_2 + \frac{\delta_2(f)}{f}$ and $
(\delta_1-b_2)^T = -\delta_1 -\frac{\delta_1(f)}{f}$ (see \cite[Cor.
3.1]{castro-ucha-jsc}).
\end{proof}

\begin{theorem}\label{main}
For any non zero reduced polynomial $f\in \CC[x,y]$,
 the complexes $\Omega^\bullet (\log f) $ and $DR(\widetilde{M}^{\log f})$ are
naturally quasi-isomorphic.
\end{theorem}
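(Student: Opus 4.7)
The plan is to chain together the three ingredients established just above the theorem: Proposition \ref{omega-sol}, the natural isomorphism (\ref{sol-derham}), and Proposition \ref{dual}. More precisely, I would write the quasi-isomorphism as the composition
$$
\Omega^\bullet(\log f) \;\xrightarrow{\simeq}\; \mathbf{R}\mathrm{Hom}_{A_2}(M^{\log f}, R)
\;\xrightarrow{\simeq}\; DR\bigl((M^{\log f})^*\bigr)
\;\xrightarrow{\simeq}\; DR(\widetilde{M}^{\log f}),
$$
where the first arrow comes from Proposition \ref{omega-sol}, the second from the identification (\ref{sol-derham}) applied with $N_1=M^{\log f}$, and the third from applying the de Rham functor to the isomorphism of Proposition \ref{dual}.

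First I would set up the first arrow: fix an $R$-basis $\{\delta_1,\delta_2\}$ of $Der(-\log f)$ with $\det(a_{ij})=f$, take the dual basis $\{\omega_1,\omega_2\}$ of $\Omega^1(\log f)$, and invoke Proposition \ref{omega-sol}, whose explicit realization $\eta_\bullet$ gives a strict isomorphism of complexes (not just a quasi-isomorphism) from $\Omega^\bullet(\log f)$ to the Koszul-like complex computing $\mathbf{R}\mathrm{Hom}_{A_2}(M^{\log f},R)$ via the logarithmic Spencer resolution.

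Next I would apply the general identification (\ref{sol-derham}). To do this legitimately I need $M^{\log f}$ to be a finitely generated (in fact holonomic) left $A_2$-module; but this is exactly what is checked at the beginning of the proof of Proposition \ref{dual}, using Calder\'on's regular-sequence argument on principal symbols. Thus (\ref{sol-derham}) with $N_1 = M^{\log f}$ produces a natural quasi-isomorphism $\mathbf{R}\mathrm{Hom}_{A_2}(M^{\log f}, R) \simeq DR((M^{\log f})^*)$.

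Finally, Proposition \ref{dual} provides a natural isomorphism of left $A_2$-modules $(M^{\log f})^* \simeq \widetilde{M}^{\log f}$, and the de Rham functor $DR(-) = -\otimes_R \Omega^\bullet_R$ with its connection $\nabla^\bullet$ is manifestly functorial in the $A_2$-module, so it transports this isomorphism into a quasi-isomorphism $DR((M^{\log f})^*) \simeq DR(\widetilde{M}^{\log f})$. Composing the three arrows yields the claim. The only non-routine point is the holonomicity needed to invoke (\ref{sol-derham}), but that has already been verified in the previous proposition, so here the argument is essentially a formal composition of natural morphisms already in hand.
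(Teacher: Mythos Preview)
Your proposal is correct and follows essentially the same three-step composition as the paper's proof: Proposition~\ref{omega-sol}, then the duality isomorphism~(\ref{sol-derham}), then Proposition~\ref{dual}. The only addition in the paper is that it records the explicit formulas for the resulting maps $\tau^0,\tau^1,\tau^2$, which are not needed for the theorem statement itself but are used later in the algorithm.
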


As a consequence of this theorem and by 
\cite{oaku-takayama-1999}, \cite{oaku-takayama-2001} and \cite{walther},
the cohomology of the complex $\Omega ^\bullet(\log f)$ can be
computed starting with the given polynomial $f$, since a system of
generators of the $R$-module $\widetilde{Der}_R(-\log f)$ can be
computed using the $R$--syzygies of
$(\partial_1(f),\partial_2(f),f)$.

\begin{proof} Let us simply denote $R=\CC[x,y]$, $A=A_2$, $M=M^{\log f}$,
$\widetilde{M}=\widetilde{M}^{\log f}$.

By Proposition \ref{omega-sol} there exists a natural isomorphism
$$\Omega^\bullet(\log f) \stackrel{\simeq}{\longrightarrow} {\bf R}
Hom_{A}(M,R)$$ and by equation (\ref{sol-derham}) there exists a
natural isomorphism
$${\bf R} Hom_{A}(M,R) \stackrel{\simeq }{\longrightarrow}
DR(M^*).$$ By Proposition \ref{dual} we have $DR(M^*) \simeq
DR(\widetilde{M})$.

We can give the explicit form of this  quasi-isomorphism of
complexes $\tau^\bullet : \Omega^\bullet(\log f) \rightarrow
DR(\widetilde{M})$.

$\tau^0 : R \rightarrow \widetilde{M}$ is defined by
$\tau^0(g)=\overline{gf}$ where $\overline{\,(\,)\,}$ means the
equivalent class in the ideal $A_2\widetilde{Der}_R(\log f)$.

$\tau^1 : \Omega^1(\log f) \rightarrow \widetilde{M}\otimes_R
\Omega_R^1$ is defined by $$\tau^1(c_1\omega_1+c_2\omega_2)= \sum_i
\overline{c_i}\otimes f\omega_i.$$

$\tau^2 : \Omega^2(\log f) \rightarrow \widetilde{M}\otimes_R
\Omega^2_R$ is defined by $\tau^2(g \omega_1\wedge \omega_2) =
\overline{g}\otimes f\omega_1 \wedge \omega_2$.
\end{proof}

\section{Algorithm}
Let us summarize our algorithm of computing logarithmic 
cohomology groups in the two dimensional case.
Most tensor products $\otimes$ in the sequel are over $A_2$.
If we omit the subscript $A_2$ for $\otimes$,
it means that the tensor product is over $A_2$.

\begin{algorithm} \rm \ \\  \label{algorithm:main}
Input: a non zero reduced polynomial $f(x,y)$ \\
Output: dimensions and bases of $H^i(\Omega^\bullet(\log f))$. \\ 
\begin{enumerate}
\item Compute a free basis $s=(s_0,s_1,s_2)$ and $t=(t_0,t_1,t_2)$
of the syzygy module of $f, f_x, f_y$ over the polynomial ring ${\bf C}[x,y]$.
This step can be performed by the following way.
\begin{enumerate}
\item Compute the minimal syzygy 
of $h(f)$, $h(f_x)$, $h(f_y)$.
Here, $h(g)$ is the homogenization of $g$.
If the number of generators is $2$, then the dehomogenizations of these generators are
$s$ and $t$.
\item If we fail on the first step, apply an algorithm for the Quillen-Suslin theorem to obtain $s$ and $t$ (call the procedure Quillen-Suslin). 
\end{enumerate}
\item Define a left ideal in $A_2$ by
\begin{equation}  \label{eq:tMlogf}
I = A_2 \cdot \left\{ 
  -s_0+s_1\partial_x+s_2\partial_y,  
  -t_0+t_1\partial_x+t_2\partial_y \right\}.
\end{equation}   
Compute the dimensions and bases of the de Rham cohomology groups
for ${\widetilde M}= A_2/I$
with the algorithm in \cite{oaku-takayama-1999}, \cite{oaku-takayama-2001}.
In other words, 
replace the $A_2$-module ${\bf C}[x,y,1/f]$ by $A_2/I$ 
of (\ref{eq:tMlogf}) in the algorithm 1.2 in \cite{oaku-takayama-1999}.
\item 
The bases of the previous step are given in
$A_2/(\partial_x A_2 + \partial_y A_2) \otimes {\widetilde M}^\bullet$
where ${\widetilde M}^\bullet$ is $(1,1,-1,-1)$-adaptive free resolution of $\widetilde M$.
Bases of de Rham  cohomology groups in 
$\Omega^\bullet \otimes \widetilde M \simeq_{q.i.s} DR(\widetilde M) 
                        \simeq_{q.i.s} \Omega^\bullet(\log f) $
are determined by 
the transfer algorithm of U.Walther \cite[Theorem 2.5 (Transfer Theorem)]{walther}
and the correspondence $\tau^i$ given in our Theorem \ref{main}.
Here, $\Omega^\bullet$ is the Koszul resolution of the right $A_2$-module
$A_2/(\partial_x A_2 + \partial_y A_2)$.
\end{enumerate}
\end{algorithm}

In the first step, we  should firstly try to find the minimal syzygy.
Because, mostly it is faster than applying implementations and algorithms 
for the Quillen-Suslin theorem.

The following example will illustrate how our algorithm works.
\begin{example} \rm   
\def\AC{
  \pmatrix{
    A_2    \cr
    \oplus \cr
    A_2    \cr
  }
}

We consider the case of $f=xy(x-y)$.
Two canonical generators of $I=\widetilde{Der}_R(\log f)$ are
$$\ell_1 = 3 + x \partial_x + y \partial_y, \ 
  \ell_2 = -(2x-y)+(-x^2+xy)\partial_x 
$$  
The associated canonical logarithmic forms are  
$$ \omega_1 = \frac{1}{f} x (x-y) dy, \ 
   \omega_2 = \frac{1}{f}(-y dx + x dy)
$$
Let us proceed on the step 2. 
We apply the procedure of computing the de Rham cohomology groups 
\cite{oaku-takayama-1999}, \cite{oaku-takayama-minimal}
for $A_2/I$.
The maximal integral root of the $b$ function for 
$I =A_2 \cdot \{\ell_1, \ell_2\}$
with respect to the weight $(1,1,-1,-1)$ is $1$. 
The dehomogenization of the $(1,1,-1,-1)$-minimal filtered free resolution 
of $A_2/I$
is
\begin{equation}  \label{eq:minimal}
 A ^\bullet : \quad\quad
\xymatrix{
 A_2[0] \ar[r]^(0.4){a^{-2}} & A_2[1]\oplus A_2[0] \ar[r]^(0.7){a^{-1}} & A_2[1]
}
\end{equation}
where
\begin{eqnarray*}
   a^{-2}(c) &=& c (-\ell_2,\ell_1-1) \quad \mbox{for\ } c \in A_2 \\ 
   a^{-1}(c,d) &=& (c,d) \pmatrix{ \ell_1 \cr
                                   \ell_2 \cr}  \quad \mbox{for\ } (c,d) \in A_2[1]\oplus A_2[0] 
\end{eqnarray*}
Following \cite[procedure 1.8]{oaku-takayama-1999},
we truncate the complex
$A_2/(\partial_x A_2 + \partial_y A_2) \otimes_{A_2} A^\bullet$
to the forms of $(1,1,-1,-1)$-degree at most $1$ since the maximal integral root
of the $b$-function is $1$.
The truncated complex is
the following complex of finite dimensional vector spaces
\begin{equation}  \label{eq:truncated}
 \xymatrix{
  {\bf C}\ar[r]^(0.3){{\bar a}^{-2}} &
  ({\bf C}+{\bf C} x + {\bf C} y) \oplus {\bf C} \ar[r]^{{\bar a}^{-1}} &
  ({\bf C}+{\bf C} x + {\bf C} y) \ar[r]^(0.7){{\bar a}^0} & 0
 }
\end{equation}
Here,
\begin{eqnarray*}
 {\bar a}^{-2}(1)&=& (-\ell_2,\ell_1-1)\ {\rm mod}\, \partial_x A_2 + \partial_y A_2 \\
                 &=& (0,0) \\
 {\bar a}^{-1}(a+b x+c y,d)
                 &=& (a+b x+c y) \ell_1 + d \ell_2 \ {\rm mod}\, \partial_x A_2 + \partial_y A_2\\
                 &=& a
\end{eqnarray*}
Therefore,
the cohomology groups $H^i(A_2/(\partial_x A_2 + \partial_y A_2) \otimes A^\bullet)$
are
\begin{eqnarray*}
H^0(A_2/(\partial_x A_2 + \partial_y A_2) \otimes A^\bullet)  &=& {\rm Ker}\, \bar a^{-2} = {\bf C} \\
H^1(A_2/(\partial_x A_2 + \partial_y A_2) \otimes A^\bullet)  &=& {\rm Ker}\, \bar a^{-1} /{\rm Im}\, \bar a^{-2} 
          = ({\bf C} x + {\bf C} y)\oplus {\bf C}  \\
H^2(A_2/(\partial_x A_2 + \partial_y A_2) \otimes A^\bullet)  &=& {\rm Ker}\, \bar a^0/{\rm Im}\, \bar a^{-1} 
         = {\bf C} x + {\bf C} y
\end{eqnarray*}

Finally, we perform the step 3.
Put $\widetilde M=A_2/I$.
In order to give bases of the cohomology groups in 
$\widetilde M \otimes_R \Omega^i_R$,
we apply the transfer theorem (algorithm) of Uli Walther \cite{walther}.

We consider the following double complex (c.f., 2.4 of \cite{walther}).
$$
\xymatrix{
  \Omega(2)\otimes A_2 \ar[r]_{1\otimes a^{-2}} &\Omega(2)\otimes(A_2 \oplus A_2) \ar[r]_{1\times a^{-1}}            &\Omega(2)\otimes A_2         & \\
   A_2\otimes A_2\ar[u]\ar[r]^{\alpha^{2,-2}} &A_2\otimes(A_2\oplus A_2)\ar[u]\ar[r]^{\alpha^{2,-1}} &A_2\otimes A_2\ar[u]\ar[r]^{}& A_2\otimes \widetilde M   \\
   \AC\otimes A_2\ar[u]^{\varepsilon^{1,-2}}\ar[r]^{\alpha^{1,-2}} &\AC\otimes(A_2\oplus A_2)\ar[u]^{\varepsilon^{1,-1}}\ar[u]\ar[r]^{\alpha^{1,-1}} &\ar[u]^{\varepsilon^{1,0}}\AC\otimes A_2\ar[u]\ar[r]^{}& \AC\ar[u]^{\varepsilon^{1,-2}}\otimes \widetilde M   \\
   A_2\otimes A_2\ar[u]^{\varepsilon^{0,-2}}\ar[r]^{\alpha^{0,-2}} &A_2\otimes(A_2\oplus A_2)\ar[u]^{\varepsilon^{0,-1}}\ar[u]\ar[r]^{\alpha^{0,-1}} &A_2\ar[u]^{\varepsilon^{0,0}}\otimes A_2\ar[u]\ar[r]^{}&A_2\ar[u]^{\varepsilon^{0,-2}}\otimes \widetilde M   \\
}
$$
Here we denote $A_2/(\partial_x A_2 + \partial_y A_2)$ by $\Omega(2)$,
which is isomorphic to $\Omega^2_R$ as the right $A_2$-module. 
The vertical complex is constructed 
by the Koszul resolution of $\Omega(2)$ as the right
module denoted by $\Omega^\bullet$.
The horizontal complex is constructed by $A^\bullet$.
Note that we have the following maps in the complex:
\begin{eqnarray*}
 \varepsilon^{1,-2}((a,b)\otimes c)&=&(-\partial_y a + \partial_x b) \otimes c \\
 \varepsilon^{0,-2}(a \otimes c)&=&(\partial_x a, \partial_y a)\otimes c
\end{eqnarray*}
\begin{eqnarray*}
 \varepsilon^{1,-1}((a,b)\otimes(c,d))&=&(\partial_y a - \partial_x b)\otimes (c,d) \\
 \varepsilon^{0,-1}(a \otimes(c,d))&=&(\partial_x a, \partial_y a)\otimes (c,d)
\end{eqnarray*}
\begin{eqnarray*}
 \varepsilon^{1,0}((a,b)\otimes c)&=&(-\partial_y a + \partial_x b)\otimes c \\
 \varepsilon^{0,0}(a \otimes c)&=&(\partial_x a, \partial_y a)\otimes c
\end{eqnarray*}
\begin{eqnarray*}
 \alpha^{2,-2}(a\otimes c)&=&a \otimes c(-\ell_2,\ell_1-1) \\
 \alpha^{2,-1}(a \otimes (c,d))&=&a \otimes (c \ell_1+d\ell_2) 
\end{eqnarray*}
\begin{eqnarray*}
 \alpha^{1,-2}((a,b)\otimes c)&=& (a,b)\otimes c(-\ell_2,\ell_1-1) \\
 \alpha^{1,-1}((a,b) \otimes (c,d))&=& (a,b)\otimes(c \ell_1+d\ell_2) 
\end{eqnarray*}
\begin{eqnarray*}
 \alpha^{0,-2}(a\otimes c)&=&a \otimes c(-\ell_2,\ell_1-1) \\
 \alpha^{0,-1}(a \otimes (c,d))&=&a \otimes (c \ell_1+d\ell_2) 
\end{eqnarray*}
The last vertical complex is quasi isomorphic to $DR(\widetilde M)$.
Let us compute transfers.
Two cohomology classes $x$ and $y$ in ${\rm Ker}\, \bar a^0 \subset \Omega(2)\otimes A_2$ 
are lifted to $1\otimes x$ and $1\otimes y$ in $A_2 \otimes A_2$
respectively, and we push them to $A_2 \otimes \widetilde M$.
It follows from the definition of $\tau^2$,
$x \omega_1 \wedge \omega_2$ and $y \omega_1 \wedge \omega_2$
is the basis of $H^2(\Omega(\log f)^\bullet)$.

Let us compute transfers of bases of $H^1(\Omega(2) \otimes A^\bullet)$.
The cohomology class $1\otimes (x,0)$ in ${\rm Ker}\, \bar a^1$
are lifted to $1\otimes (x,0)$ in $A_2 \otimes (A_2 \oplus A_2)$.
We have $\alpha^{2,-1}(1 \otimes (x,0)) = 1 \otimes x \ell_1$.
Solving $-\partial_y a + \partial_x b = x \ell_1$ in $A_2$,
we obtain the preimage by $\varepsilon^{1,0}$;
we have $\varepsilon^{1,0}((-xy,x^2) \otimes 1) = x \ell_1$.
Push this element to $\AC \otimes \widetilde M$, we obtain
$(xy dx - x^2 dy) \otimes 1$.
Let us compute the preimage by $\tau^1$.
Solving $c_1 f \omega_1 + c_2 f \omega_2 = xy dx - x^2 dy$,
we obtain $c_1=0, c_2=-x$.
Therefore, $1 \otimes (x,0)$ stands for $-x \omega_2$.
Analogously,
$1 \otimes (y,0)$ is transfered to
$-y^2 dx + x y dy$ and stands for $-y \omega_2$ and
$1 \otimes (0,1)$ is transfered to
$x(y-x) dy$ and stands for $\omega_1$.
In summary,
$$H^1(\Omega(\log f)^\bullet) = {\bf C} (-x)\omega_2 + {\bf C} (-y)\omega_2 + {\bf C} \omega_1.$$

Finally, we compute transfers of bases of $H^0(\Omega(2) \otimes A^\bullet)$.
Since $\alpha^{2,-2}(1 \otimes 1) = 1 \otimes (-\ell_2,\ell_1-1)$,
we firstly need to compute the preimage of this element by
$\varepsilon^{1,-1}$.
Since the projection of this element to $\Omega(2) \otimes (A_2 \oplus A_2)$
is zero,
we have
$-\ell_2 = \partial_x x (x-y) $ and $\ell_1-1 = \partial_x x + \partial_y y$.
We decompose $1 \otimes (-\ell_2,\ell_1-1)$ as
\begin{eqnarray*}
 1 \otimes (-\ell_2,0) + 1 \otimes (0,\ell_1-1)
 &=& -\ell_2 \otimes(1,0) + (\ell_1-1) \otimes (0,1)  \\
 &=& \partial_x x(x-y) \otimes (1,0) + (\partial_x x+\partial_y y)\otimes(0,1)
\end{eqnarray*}
Since $\varepsilon^{1,-1}$ is linear, this sum is equal to
$ \varepsilon^{1,-1}(c)$ where
$c_1 = (0,-x(x-y)) \otimes (1,0) + (y,-x) \otimes (0,1)$.
Since $\alpha^{1,-1}(c_1) = (y \ell_2, -x(x-y) \ell_1-x \ell_2)\otimes 1
 = (\partial_x x y (y-x), \partial_y x y (y-x)) \otimes 1$,
the preimage of $\alpha^{1,-1}(c_1)$ by $\varepsilon^{0,0}$
is equal to $ x y (y-x) \otimes 1 \in A_2 \otimes \widetilde M$.
Therefore, the preimage of $\tau^0$ is equal to $-1$ and hence
$H^0(\Omega(\log f)^\bullet) = {\bf C}(-1)$.
Although we have done this computation by hand,
computation of transfers can be done by Gr\"obner basis computation.
See \cite{walther} and the source code for {\tt deRhamAll} of the Macaulay 2
package for D-modules \cite{leykin-tsai}.
\end{example}

Before presenting implementations and larger examples, 
we explain a bit about a procedure to find a preimage of $\tau^i$ 
in general. 
The transfer algorithm gives an element in $\Omega^i \otimes_{A_2} \widetilde M$
where $\Omega^\bullet$ is the Koszul resolution of $\Omega(2)\simeq \Omega^2_R$
as the right $A_2$-module.
This element can be identified with a differential form
with coefficients in $\tilde M$ and we need to find the preimage of it by 
$\tau^i$ which lies in $\Omega^i(\log f)$.
This can be performed by the method of undetermined coefficients.

Consider the case of $\tau^1$.
Take an element $c_1 \omega_1 + c_2 \omega_2$ in $\Omega^1(\log f)$
where $c_i \in R$.
We have seen in Theorem \ref{main} that
\begin{equation} \label{eq:tau1}
\tau^1(c_1 \omega_1 + c_2 \omega_2) =    
  f {\bar \omega_1} \otimes_{A_2} \bar c_1 + f {\bar \omega_2} \otimes_{A_2} \bar c_2 \ 
 \in \pmatrix{A_2 \cr \oplus \cr A_2 \cr} \otimes_{A_2} \widetilde M
\end{equation}
Here, we identify $\pmatrix{1\cr 0\cr} \otimes_{A_2} m_1$ with $m_1 \otimes_R dx$ and
$\pmatrix{0\cr 1\cr} \otimes_{A_2} m_2$ with $m_2 \otimes_R dy$, $m_i \in \widetilde M$
(comparison theorem)   
and  when $\omega_i = a_i dx + b_i dy$, we denote 
$\pmatrix{a_i\cr b_i\cr}$ by $\bar \omega_i$.
As the output of the transfer algorithm, we are given
an element $m_1 dx + m_2 dy$, $m_i \in \widetilde M$.
We regard $m_i$ as an element in $A_2$ in the sequel.
We rewrite $f \omega_i$ as $f \omega_1 = A dx + B dy$ and $f \omega_2 = C dx + D dy$.
Assume $I$ is generated by $\ell_1$ and $\ell_2$.
Then, the definition of $\tau^1$ (\ref{eq:tau1}) induces the following
identity in $A_2$ by taking coefficients of $dx$ and $dy$
\begin{eqnarray}
   A c_1
 + C c_2 
&=& m_1 + \sum_{j=1}^2 d_1^{j} \ell_j +\partial_x e   \label{eq:pre1tau1} \\ 
   B c_1 
 + D c_2 
&=& m_2 + \sum_{j=1}^2 d_2^{j} \ell_j +\partial_y e   \label{eq:pre2tau2}  
\end{eqnarray}
where $c_i \in R$, $d_i^{j}, e \in A_2$ are unknown.
Fix a degree bound $m$ for these elements and determine
these elements by the method of unknown coefficients.
The identities (\ref{eq:pre1tau1}) and (\ref{eq:pre2tau2})
induce a system of linear equations over ${\bf C}$ for the coefficients.
Increasing the degree bound and solving the system, 
we will be able to obtain $c_1$ and $c_2$ in finite steps by virtue of Theorem \ref{main}.

Consider the case of $\tau^2$.
Since our basis in $H^2(\Omega^\bullet \otimes \tilde M)$ is given in terms of
$x$ and $y$ and $f \omega_1 \wedge \omega_2 = dx\wedge dy$, 
we need no computation to find the preimage by $\tau^2$.

Let us consider the case of $\tau^0$.
Let $m$ be an output of the transfer algorithm. It lies in $A_2$ in general.
Finding the preimage $g$ of $\tau^0$ can be done by solving
$ g f = m + \sum_{j=1}^2 d_j \ell_j  $
where $g \in R$ and $d_j \in A_2$.

\section{Implementation and Examples} 
The second and third steps of Algorithm \ref{algorithm:main} 
can be performed with the help 
of the D-module package on Macaulay2;
use the commands {\tt DintegrationAll} to obtain the dimension of the cohomology groups, 
{\tt DintegrationClasses} to obtain the bases of cohomology groups,
and a modification of {\tt DeRhamAll} to obtain the bases of cohomology groups
in $\Omega^\bullet \otimes \widetilde M$.
Unfortunately, this implementation has not installed an efficient algorithm 
of computing $b$-function by Noro \cite{noro}
to get the truncated complex 
in \cite{oaku-takayama-1999}, \cite{oaku-takayama-2001}.
Then, only relatively small examples are feasible.
The Example \ref{example:small} is computed by our Macaulay2 program.
The Example \ref{example:big} is computed by our implementation on
kan/k0 and Risa/Asir with an implementation of \cite{noro}
(the transfer algorithm has not been implemented yet for kan/k0).
This implementation also uses the minimal filtered resolution to reduce the size
of complex of $A_2$-modules \cite{oaku-takayama-minimal}.
The program is contained in the OpenXM package with the name 
{\tt logc2.k} 
({\tt http://www.openxm.org}).
Our implementation does not contain that for the Quillen-Suslin theorem.
We utilize the implementation by A.Fabianska on Maple when the step 1-(a) fails.
We also note that computation of  the preimage of $\tau^1$ may become a bottleneck
of computation.

\begin{example} \rm  \label{example:small}
(Continued from Example \ref{example:qs} (b).)
We will determine bases of $H^i(\Omega^\bullet(\log f))$ where $f=(x^3+y^4+x y^3)(x^2+y^2)$.
We firstly use Fabianska's program for the Quillen-Suslin theorem
to find the $2$ free generators of the syzygies
of $f,f_x,f_y$.
The two rows of the following matrix $S$ are the generators 
$$
\footnotesize{ 
S=
\pmatrix{
   S_{11} &     (  -  23/6  {y}+ 1/2)   {x}^{ 2} +  (    {y}^{ 3} +  {y}^{ 2} -  2  {y})  {x}-  5/6   {y}^{ 3} &     (   1/3  {y}+ 1/2)   {x}^{ 2} +  (  -  3   {y}^{ 2} +  1/2  {y})  {x}+    {y}^{ 4} +  4/3   {y}^{ 3} -  3/2   {y}^{ 2}  \cr
    S_{21} &    -  46/75   {x}^{ 3} +  (   4/25   {y}^{ 2} -  2/25  {y})   {x}^{ 2} -   8/15   {y}^{ 2}   {x}&      4/75   {x}^{ 3} -   12/25  {y}   {x}^{ 2} +  (   4/25   {y}^{ 3} -  2/75   {y}^{ 2} )  {x}-  2/5   {y}^{ 3}  \cr
}
}
$$
where
$S_{11}=(   115/6  {y}- 5/2)  {x}   -  6   {y}^{ 3} -  43/6   {y}^{ 2} +  9  {y}$,
$S_{21}=46/15   {x}^{ 2} +  (  -  24/25   {y}^{ 2} +  22/75  {y})  {x}+  12/5   {y}^{ 2}$.
Put $A=\pmatrix{S_{12} & S_{13} \cr
                S_{22} & S_{23} \cr}$.
Then, ${\rm det}(A)=\frac{1}{3} f$.
We put $\omega_1 = \frac{1}{f} (a_{22} dx - a_{21} dy)$ and
$\omega_2 = \frac{1}{f}(-a_{12} dx + a_{11} dy)$.
($\sqrt{3}\omega_i$ agrees with the $\omega_i$ in Theorem \ref{main}.)

We apply the integration algorithm and the transfer algorithm
for $\widetilde M$.
We obtain the following result.
(1) $H^{0}(DR(\widetilde M))$ is spanned by $1\otimes f$
and then we have  $H^0(\Omega^\bullet(\log f)) \simeq {\bf C} \cdot 1$.
(2) $H^{2}(DR(\widetilde M))$ is spanned $1\otimes a$
where $a$ runs over
\begin{verbatim}
                   3        3       2     3      4
o9 = {{1}, {-x}, {y }, {-x*y }, {x*y }, {x y}, {y }}
\end{verbatim}
(We have pasted the output of our Macaulay 2 program {\tt trans.m2}.)
Then,  we have
$$H^2(\Omega^\bullet(\log f)) \simeq ({\bf C} \cdot 1 + {\bf C} \cdot (-x) 
  + \cdots + {\bf C} \cdot y^4 ) \, \omega_1 \wedge \omega_2
$$
(3) $H^{1}(DR(\widetilde M))$ is spanned by $3$ differential forms
$ m_1 dx + m_2 dy$ where $m_1$, $m_2$ are elements in $A_2$,
of which explicit expressions are a little lengthy. 
We solve the identities (\ref{eq:pre1tau1}) and (\ref{eq:pre2tau2})
to find $c_1$ and $c_2$. 
In other words, we need to compute  preimages of $m_1 dx + m_2 dy$  by $\tau^1$.
As we explained, this can be done by the method of undetermined coefficients degree by degree.
We can find solutions when the degree of $c_i, d^j_i, e$
with respect to $x,y$ is $6$ and that with respect to $\partial_x, \partial_y$
is $0$.
Here is a basis of $3$-dimensional vector space $H^1(\Omega^\bullet(\log f))$ obtained by this method.
\begin{eqnarray*}
&& -y x \omega_1 -4/25 x^2 \omega_2 \\
&& ((   215/28  {y}- 1101/280)  {x}-  367/56   {y}^{ 2} ) \omega_1 
  +(   43/35   {x}^{ 2} -   367/350  {y}  {x}  ) \omega_2 \\
&& ((  {y}- 11/30)  {x}   -  28/9   {y}^{ 3} -  13/6   {y}^{ 2} +  14/3  {y}) \omega_1 
+  ( 4/25   {x}^{ 2} +  (  -  112/225   {y}^{ 2} +  2/5  {y})  {x}+  56/45   {y}^{ 2} ) \omega_2  
\end{eqnarray*}
All programs and session logs to find this answer is obtainable from \\
{\tt http://www.math.kobe-u.ac.jp/OpenXM/Math/LogCohomology/2007-11/log-2007-11-22.txt}
The logarithmic comparison theorem does not hold for this example.
In fact, the dimensions of the de Rham cohomology groups $H^i(\Omega_f^\bullet)$,
$(i=2,1,0)$ are $5,3,1$ respectively.
\end{example}

\begin{example} \rm  \label{example:big}
We apply a part of our algorithm to
compute the dimensions of the cohomology groups $H^i(\Omega^\bullet(\log f))$
for $f=x^p+y^q+x y^{q-1}$.
Here is a table of $p$, $q$ and the dimensions of
$H^2$, $H^1$, $H^0$  and timing data. \\
\begin{tabular}{|c|c|l|l|}
\hline
$p$ & $q$ & Dimensions & Timing in seconds \\ \hline \hline
$10$ & $11$ & (8,1,1)  &  3.5 \\ \hline
$10$ & $12$ & (9,1,1)  &  4.6 \\ \hline
$10$ & $13$ & (10,1,1)  &  6.9 \\ \hline
$10$ & $14$ & (11,1,1)  &  9.4 \\ \hline
$10$ & $20$ & (17,1,1)  &  55.0 \\ \hline
$10$ & $21$ & (18,1,1)  &  86.8 \\ \hline
\end{tabular} \\
The program is executed on a machine with 2G RAM
and Pentium III (1G Hz).  
\end{example}
The homogenization of $f$, $f_x$, $f_y$ generates an ideal that is Cohen-Macaulay.
These examples do not need to call the subprocedure Quillen-Suslin.
However, the logarithmic comparison theorem does not hold for these examples.
Computation of de Rham cohomology groups is not feasible by our implementation.

\section{A Yet Another Algorithm}
In the previous section, we have presented a general algorithm
of computing a basis
of the logarithmic cohomology groups for plane curves.
However, this algorithm relies on algorithms for the Quillen-Suslin theorem
and they are sometimes slow.
We will present a yet another algorithm, which is free from the Quillen-Suslin
theorem, but it works only for computing a basis of 
the middle dimensional cohomology
group $H^2(\Omega^\bullet(\log f))$ under some conditions on $f$.
This section can be read independently from other sections.
For reader's convenience, we will also redefine some notations.

Before stating the main algorithm,
we start with an introductory example, which explains
the idea of our algorithm.

Put $K={\bf C}$ and $L= (1-x)x \partial + 2x (= \theta_x - x (\theta_x-2))$.
We consider the problem of 
determining a basis of the $K$-vector space  $K[x]/L \cdot K[x]$.
Since $L$ is a $K$-linear map and
$K[x]$ is an infinite dimensional $K$-vector space,
the quotient has the structure of a $K$-vector space.
However, note that $L \cdot K[x]$ is not an ideal 
and we cannot use Gr\"obner basis to get a basis.

Let us act $L$ on monomials;
$L \cdot x^k = k x^k - (k-2) x^{k+1}$.
For small $k$, they are
$ L \cdot 1 = 2 x $,
$ L \cdot x = x + x^2 $,
$ L \cdot x^2 = 2 x^2 $.
Then $ x^{k+1} \simeq \frac{k}{k-2} x^k$ modulo $L \cdot K[x]$.
In particular, if $k \geq 3$, then the monomial $x^{k+1}$ can be reduced 
to a lower order monomial
modulo $L \cdot K[x]$.
Hence, the set of monomials $1, x, x^2, x^3$ generates $ K[x]/L \cdot K[x]$.
More precisely, we can prove that it is isomorphic to $F_3/L \cdot F_2 $.
Where $F_k$ is the set of polynomials of which degree is
less than or equal to $k$.
The monomials $1, x, x^2, x^3$ are not independent modulo $L \cdot K[x]$
and satisfies the relation above.
Finally, we conclude that  $K[x]/L K[x] \simeq K \cdot 1 + K \cdot x^3$. 

Note  that $3$ is the magic number,
which is characterized as follows.
Put $L^* = -(1-x)x \partial -1 + 4x$.
$ {\rm in}_{(1,-1)} ( L^* )  \cap K[-\partial x] $
is generated by $b(-\partial x)$ where
$b(s) = s-3$.
The polynomial $b(s)$ is called the indicial polynomial 
($b$-function) for integration.
The magic number $3$ is the root of $b(s)=0$.
We will call the method to bound a degree by a root of a $b$-function
{\it $b$-function criterion}.
T.Oaku firstly introduced the $b$-function criterion to compute
restrictions and integrations of $D$-modules \cite{Ob}.
The topic of computing $K[x]/L \cdot K[x]$ by the $b$-function
was also discussed in more detail in an expository book 
``D-modules and Computational Mathematics'' (in Japanese)
by T.Oaku. 

Let $f$ be a polynomial in two variables. 
Put
$$ \Omega^k_f = \mbox{$k$-form with coefficients in $K[x,y,1/f]$}$$
As we have explained in the introduction,
the $k$ form
$\omega \in \Omega_f^k$ is called {\it logarithmic} $k$-form
iff both of $f \omega$ and $d f \wedge \omega$ have polynomial
coefficients.
The space of logarithmic $k$-forms is denoted by
$\Omega^k(\log f)$.
The question we address in this section is the computation of
$ \frac{\Omega^2(\log f)}{d \, \Omega^1(\log f)} $.
It is easy to see that 
$\displaystyle{\Omega^2(\log f) = \frac{K[x,y]dx\wedge dy}{f}} $.
Let us determine all the logarithmic $1$-forms.
Let  $(p,q,r)$ a triple of polynomials such that
\begin{equation}
f_y p - f_x q + f r =0   \quad \mbox{(syzygy equation).}
\end{equation}
Note that $(0,f,f_x)$, $(f,0,-f_y)$, $(f_x,f_y,0)$ are 
trivial solutions of the syzygy equation.
For a solution $(p,q,r)$ of the syzygy equation,
$\omega=\frac{p dx + q dy}{f}$ belongs to $\Omega^1(\log f)$.
Conversely, any logarithmic $1$-form can be expressed in this way.
In fact, the condition that $df \wedge \omega$ has a polynomial
coefficient is equivalent to that $f_y p - f_x q$ is a multiple of $f$.

Put $\omega=\frac{p dx + q dy}{f}$.
Let $e(x,y)$ be any polynomial.
Then, 
$ d(e \omega) = (Le) \frac{dx\wedge dy}{f} $
where
$$ L= q \partial_x -p \partial_y + q_x - p_y + \frac{f_y p - f_x q}{f}$$

We denote the Weyl algebra $A_2$ by $D$ for simplicity in the sequel.
Suppose that
$L_i$, $(i=1, \ldots, m)$  stand for a set of generators of the solution space
of the syzygy equation, which is a $K[x,y]$-module.
Then $d\, \Omega^1(\log f) = \sum L_i K[x,y]  dx\wedge dy/f$.
Therefore, the computation of $H^2$ is nothing but the computation
of $K[x,y]/\sum_{i=1}^m L_i \bullet K[x,y]$.
Put $I^* = D\cdot \{ L_1^*, \ldots, L_m^* \}$, which is a left $D$ ideal.
We denote by $F_k$ the $K$-subvector space of $D$ of which
$(1,1,-1,-1)$-order is less than or equal to $k$
\cite[p.14, p.203]{SST}

\noindent
\begin{algorithm} \rm \label{algorithm:logch2}
$H^2(\Omega^\cdot(\log f))$. \\
Step 1.  Find generators of the syzygy equation and obtain
explicit expressions of $L_i$. \\
Step 2. Compute $(1,1,-1,-1)$-Gr\"obner basis (standard basis) of $I$.
We denote the elements of the Gr\"obner basis by ${L^i}^*$ (renaming). \\
Step 3.  Find the monic generator $b(-\partial_x x - \partial_y y)$ of
$ {\rm in}_{(1,1,-1,-1)}\,(I) \cap K[-\partial_x x - \partial_y y]$. \\
Step 4. Let $k_0$ be the maximal non-negative root of $b(s)=0$.
Then,  return $K$-vector space basis $\{ c_i \}$ of
$$ F_{k_0}/\sum_i L_i \cdot F_{k_0 - {\rm ord}_{(1,1,-1,-1)}\,(L_i)}.  $$ 
$\{ c_i dx \wedge dy / f \}$ is a basis of $H^2$.
\end{algorithm}

\noindent
The steps 2, 3, 4 can also be done by computing
$ D/(I^* + \partial_x D + \partial_y D)$ ($0$-th integral module)
where $I^*$ is the formal adjoint of $I$.
(As to details for the steps 2, 3, 4, see \cite{OTT}.)

\noindent
{\bf Note}:
Although our discussion is independent from the discussions of the previous sections,
the left ideal generated by $L_i^*$ is nothing but
$\widetilde{Der}_R(-\log f)$
and hence this algorithm and the Algorithm \ref{algorithm:main}
are analogous for computing a basis of $H^2(\Omega^\bullet(f))$.
We also note that finding bases for 
$H^i(\Omega^\bullet \otimes_{A_2} \widetilde M)$ can be performed
by applying the integration algorithm and the transfer algorithm for 
$D/I^*$.
The Algorithm \ref{algorithm:main} relies on algorithms for the Quillen-Suslin
theorem to find bases for $H^i(\Omega(\log f)^\bullet)$, $i=1,0$.

\begin{theorem}
If 
 ${\rm dim}\,V(f,f_x, f_y)\leq 0$,
 ${\rm dim}\,V(f,f_x)\leq 1$,
 ${\rm dim}\,V(f,f_y)\leq 1$,
 then the Algorithm \ref{algorithm:logch2}
is correct.
\end{theorem}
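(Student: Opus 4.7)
The plan is to reduce the correctness of Algorithm \ref{algorithm:logch2} to the holonomy of $\widetilde{M}^{\log f}$, and then invoke the standard $b$-function criterion for integration of holonomic $D$-modules due to Oaku and Takayama. First, I would observe that the discussion immediately preceding the theorem already establishes the identification
$$H^2(\Omega^\bullet(\log f)) \;\cong\; K[x,y]\Big/\sum_{i=1}^m L_i\cdot K[x,y],$$
provided the triples $(p_i,q_i,r_i)$ generate the $K[x,y]$-module of syzygies of $(f_y,-f_x,f)$, which is exactly what Step 1 produces. So correctness of the algorithm reduces to correctness of the truncation performed in Step 4.

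Next I would identify the left ideal $I^*=D\cdot\{L_1^*,\dots,L_m^*\}$ with the ideal appearing in Section 2. A direct formal-adjoint computation shows $L_i^*=\delta_i+\delta_i(f)/f$, where $\delta_i=-q_i\partial_x+p_i\partial_y$ is the logarithmic derivation attached to the syzygy $(p_i,q_i,r_i)$. Since the $(p_i,q_i,r_i)$ generate the full syzygy module over $R$, the $\delta_i$ generate $Der_R(-\log f)$, and therefore $L_i^*$ generate $\widetilde{Der}_R(-\log f)$. Hence $D/I^*\simeq\widetilde{M}^{\log f}$, and the quotient $K[x,y]/\sum L_i\cdot K[x,y]$ is naturally the $0$-th integration module of $\widetilde{M}^{\log f}$, i.e.\ $D/(I^*+\partial_x D+\partial_y D)$ as a $K$-vector space.

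I would then verify that $\widetilde{M}^{\log f}$ is holonomic under the stated hypotheses. The characteristic ideal is generated by the symbols $\sigma(L_i^*)=-q_i\xi_1+p_i\xi_2$. Analyzing $\mathrm{Ch}(\widetilde{M}^{\log f})\subset T^*\mathbb{C}^2$ fiber by fiber: over points outside $V(f)$ the fiber is the zero section because $f\cdot Der_K(R)\subset Der_R(-\log f)$; over smooth points of $V(f)$ the logarithmic derivations evaluate to the tangent line, so the fiber is the one-dimensional conormal spanned by $(f_x,f_y)$ (here the coprimality implicit in $\dim V(f,f_x)\le 1$ and $\dim V(f,f_y)\le 1$ ensures the symbols do not share an extraneous divisor over these strata); and over the singular points, which form a finite set by $\dim V(f,f_x,f_y)\le 0$, the fiber is at most two-dimensional. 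Summing the local contributions one finds $\dim\mathrm{Ch}(\widetilde{M}^{\log f})=2=n$, which is the holonomy criterion for $A_2$-modules. With holonomy in hand, the generator $b(-\partial_x x-\partial_y y)$ of $\mathrm{in}_{(1,1,-1,-1)}(I^*)\cap K[-\partial_x x-\partial_y y]$ is non-zero, and the standard $b$-function argument (see \cite{oaku-takayama-1999,OTT}) reduces every monomial of $(1,1)$-degree $k>k_0$ to one of degree $\le k_0$ modulo $\sum_i L_i\cdot K[x,y]$. This gives the identification of the quotient with $F_{k_0}/\sum_i L_i\cdot F_{k_0-\mathrm{ord}_{(1,1,-1,-1)}(L_i)}$ returned in Step 4.

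The main obstacle I anticipate is the fiber analysis at singular points. Bounding the fiber by $2$ (and not $3$, i.e.\ insisting that the symbols $\sigma(L_i^*)$ cut out something proper in the cotangent fiber even at an isolated singular point) requires exhibiting enough logarithmic derivations whose symbols at the singular point span a non-trivial linear form; this is where the isolatedness of $V(f,f_x,f_y)$ and the auxiliary one-dimensionality conditions on $V(f,f_x)$ and $V(f,f_y)$ combine to rule out a common factor among the $\sigma(L_i^*)$ extending across the singular point. Once this local verification is done, the global dimension count and hence the holonomy follow by patching, and the correctness of the algorithm is immediate from the Oaku--Takayama integration framework.
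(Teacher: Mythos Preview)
Your approach is essentially the paper's: show that $D/I$ (equivalently $D/I^*$, since the characteristic variety is unchanged under formal adjoint) is holonomic by bounding its characteristic variety to dimension $\le 2$, and then invoke Oaku's $b$-function criterion for the $0$-th integration. The paper's execution is more direct, however. Rather than reasoning abstractly about all logarithmic derivations, it simply notes that among the $L_i$ one always has the three operators coming from the \emph{trivial} syzygies $(0,f,f_x)$, $(f,0,-f_y)$, $(f_x,f_y,0)$, whose principal symbols are $f\xi$, $-f\eta$, and $f_y\xi - f_x\eta$. Hence the characteristic variety lies in $V(f\xi,\,f\eta,\,f_y\xi - f_x\eta)$, and a short case split on which of $f,f_x,f_y$ vanish at a base point, together with the three dimension hypotheses, gives the bound $\le 2$ immediately.

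Your final ``obstacle'' paragraph reflects a confusion you should discard. Over a singular point $(a,b)\in V(f,f_x,f_y)$ the fiber of the characteristic variety may well be the \emph{entire} $2$-dimensional cotangent plane---indeed all three symbols above vanish identically there---and you do not need to prove otherwise. Since $\dim V(f,f_x,f_y)\le 0$, the preimage of the singular locus has dimension at most $0+2=2$, which is exactly what holonomy requires. No ``common factor'' argument and no non-trivial linear form in the cotangent fiber over the singular points is needed; the hypotheses $\dim V(f,f_x)\le 1$ and $\dim V(f,f_y)\le 1$ are used not there but on the intermediate strata (where exactly one partial vanishes along $V(f)$), to ensure those strata carry only a $1$-dimensional fiber over a $\le 1$-dimensional base.
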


\begin{proof}
Let $I$ be the left ideal in $D$ generated by $L_1, \ldots, L_m$.
We may assume that $I$ contains $f \partial_x$, $f \partial_y$ and
$f_y \partial_x - f_x \partial_y$.
Therefore, the characteristic variety of $I$ is contained in
$V(f(x,y)\xi, f(x,y)\eta, f_y(x,y) \xi - f_x(x,y) \eta)$, of which dimension
is less than or equal to $2$ from the assumption.
In fact, assume $(a,b) \in V(f,f_x,f_y)$.
Then, $\xi$ and $\eta$ are free and then the dimension of the characteristic variety
is less than or equal to $2$.
Assume $(a,b) \in V(f,f_x) \setminus V(f,f_x,f_y)$.
Then, we have $f(a,b)=0, f_x(a,b)=0$ and $f_y(a,b) \not=0$.
Then, $\eta$ is free and $\xi = 0$ and then the dimension of the characteristic variety
is less than or equal to $2$.  
The rest cases can be shown analogously.
Therefore, $D/I$ is a holonomic $D$-module 
and hence a non-trivial $b$ exists (\cite[Chapter 5, Theorem 5.1.2]{SST}).
The rest of the correctness proof is analogous with
that of the $0$-th integration algorithm of $D$-modules
\cite{Ob}, \cite[Chapter 5; Theorems 5.2.6 and 5.5.1]{SST}.
\end{proof}
\bigbreak

\noindent
{\bf Note}: The algorithm works to get $H^n(\Omega^\bullet(\log f))$
in the $n$-variable case if ${\rm dim}\, V(f,f_{x_{i_1}}, \ldots, f_{x_{i_m}}) \leq n-m$
for all $m=1, \ldots, n$ and all combinations $i_1, \ldots, i_m$.
The algorithm and the correctness proof are analogous.
In fact,
since $f \xi_i$ and $ (-1)^i f_{x_j} \xi_i - (-1)^j f_{x_i} \xi_j $, 
$( 1\leq i \not= j \leq n)$ are in the characteristic ideal for $I$ 
and then the dimension 
of the characteristic variety is less than or equal to $n$
by utilizing the condition.


\begin{example} \rm
For $f=(x^3+y^4+x y^3)(x^2+y^2)$, we have
${\rm dim}\, H^2(\Omega^\bullet(\log f)) = 7$
with our yet another algorithm \ref{algorithm:logch2}.
The execution time is 1.9s.
We need to call the procedure Quillen-Suslin if we use the first algorithm. 
\end{example} 

\medbreak

We close this paper with
a final note and the acknowledgement of this paper.
We think that logarithmic differential forms give nice simple bases
for some of hypergeometric integrals as pairings of twisted cycles and cocycles
when the logarithmic comparison theorem holds for twisted de Rham complex.  
We hope that our result have applications to study hypergeometric integrals.
The authors are grateful to A.Fabianska for helping us to compute free bases
of syzygies by using her implementation for Quillen-Suslin's theorem.

\end{document}